\newtheorem{theorem}{Theorem}[section]
\theoremstyle{definition}
\numberwithin{equation}{section}
\newcommand\tint{\mathop{\mathpalette\tb@int{t}}\!\int}
\newcommand\bint{\mathop{\mathpalette\tb@int{b}}\!\int}
\newcommand\tb@int[2]{%
  \sbox\z@{$\m@th#1\int$}%
  \if#2t%
    \rlap{\hbox to\wd\z@{%
      \hfil
      \vrule width .35em height \dimexpr\ht\z@+1.4pt\relax depth -\dimexpr\ht\z@+1pt\relax
      \kern.05em 
    }}
  \else
    \rlap{\hbox to\wd\z@{%
      \vrule width .35em height -\dimexpr\dp\z@+1pt\relax depth \dimexpr\dp\z@+1.4pt\relax
      \hfil
    }}
  \fi
}
\def\R{\mathbb{R}}
\def\N{\mathbb{N}}
\def\F{\mathbb{F}}
\def\P{\mathbb{P}}
\def\Q{\mathbb{Q}}
\def\lbrak{\left[}
\def\rbrak{\right]}
\def\abClosed{\lbrak a,b\rbrak}
\def\lpar{\left(}
\def\rpar{\right)}
\def\abOpen{\lpar a,b\rpar}
\def\into{\longrightarrow}
\def\implies{\Longrightarrow}
\def\setdiff{\backslash}
\def\CA{{\bf(CA)}}
\def\LIP{{\bf(LIP)}}
\def\LMP{{\bf(LMP)}}
\def\LPi{{\bf(LP1)}}
\def\LPii{{\bf(LP2)}}
\def\LPiii{{\bf(LP3)}}
\def\mCA{\mbox{\bf(CA)}}
\def\mLIP{\mbox{\bf(LIP)}}
\def\mLMP{\mbox{\bf(LMP)}}
\def\mLPi{\mbox{\bf(LP1)}}
\def\mLPii{\mbox{\bf(LP2)}}
\def\mLPiii{\mbox{\bf(LP3)}}
\def\theSet{\mathcal{S}}
\def\sub{\subseteq}
\def\nseqLim{\displaystyle\lim_{n\rightarrow\infty}}
\def\aseq{\lpar a_n\rpar}
\def\bseq{\lpar b_n\rpar}
\def\Int{\displaystyle\int}
\def\Star{{\bf($\star$)}}
\def\symdiff{\triangle}
\def\Star{{\bf($\star$)}}
\def\mStar{\mbox{\bf($\star$)}}
\begin{document}


\baselineskip=17pt


\title{Littlewood's principles in reverse real analysis}

\author{Rafael Reno S. Cantuba\\
Department of Mathematics and Statistics\\ 
De La Salle University\\
2401 Taft Ave., Manila\\
1004 Metro Manila, Philippines\\
E-mail: rafael\_cantuba@dlsu.edu.ph}

\date{}

\maketitle


\renewcommand{\thefootnote}{}

\footnote{2020 \emph{Mathematics Subject Classification}: Primary 28A20; Secondary 26A03, 12J15.}

\footnote{\emph{Key words and phrases}: Littlewood's three principles, Lusin's theorem, Egoroff's theorem, completeness axiom, ordered field.}

\renewcommand{\thefootnote}{\arabic{footnote}}
\setcounter{footnote}{0}


\begin{abstract}
If local forms of Littlewood's three principles are stated as  axioms for an ordered field, then each principle is equivalent to the completeness axiom.
\end{abstract}

\section{Introduction}

In the past decade, the papers \cite{dev14,pro13,tei13} gave an engaging account of how some of the traditional theorems from calculus and real analysis are each equivalent to the least upper bound property of, or completeness axiom for, the field $\R$ of all real numbers. In a mathematical theory, theorems are proven from a collection of axioms, but in the aforementioned papers, the opposite thought process was exhibited: in the appropriate universe of discourse (which in this case is the class of all ordered fields) selected traditional theorems were each stated as an axiom (for an arbitrary ordered field) and the statement of the completeness axiom was proven as a consequence. As the author, J. Propp, of \cite{pro13} pointed out, this intellectual exercise has the flavor of \emph{Reverse Mathematics} \cite[p. 392]{pro13}, which is interesting in its own right. See, for instance, \cite[Section 1.1]{sim09} or \cite[Chapter 1]{sti18}. Propp further pointed out that this kind of investigation ``sheds light on the landscape of mathematical theories and structures,'' and that ``arguably the oldest form of mathematics in reverse'' is the quest for a list of equivalences for the parallel postulate in Euclidean geometry \cite[pp. 392--393]{pro13}. An independent work \cite{tei13} gave more technical proofs and initiated a list of `completeness properties' for an ordered field that was expanded in \cite{dev14}, so that, currently, 72 characterizations of the completeness axiom have been identified. An almost full cast of the traditional calculus theorems, ranging from the Intermediate Value Theorem to convergence tests and L'H\^opital's rule, appear in the list, but we also find some forms of the Arzel\`a-Ascoli Theorem, and the Lipschitz property of $C^1$ functions. This paper is inspired by the question, of possibly including in the list, some of the theorems of measure theory. We decided to start with the most fundamental\textemdash Littlewood's three principles, the well-known heuristics for understanding measure theory. 

A quote from \emph{the} J. E. Littlewood is now inevitable. From \cite[p. 26]{lit44}, the three principles are: 
\begin{enumerate}\item\label{prin1} Every measurable set is nearly a finite sum (meaning union) of intervals.
\item\label{prin2} Every (measurable) function is nearly continuous.
\item\label{prin3} Every convergent sequence of (measurable) functions is nearly uniformly convergent.
\end{enumerate}
The principle \ref{prin2} is also known as Lusin's Theorem \cite[pp. 72, 74]{roy88}, while \ref{prin3} is also referred to as Egoroff's Theorem \cite[p. 40]{lit44}. We shall keep the form of each of the above principles as a `one-directional' implication. For instance, \ref{prin2} may be rephrased as ``If $f$ is a measurable function, then $f$ is nearly continuous.'' In the textbooks, we often find the biconditional form ``$f$ is a measurable function if and only if $f$ is nearly continuous,'' but this is \emph{not} what we shall use. The choice of which statement appears in the hypothesis and which statement appears in the conclusion of the one-directional implication is based on how J. E. Littlewood originally stated the principles: the `nearly' part is always in the conclusion of the conditional statement. Principles \ref{prin1} and \ref{prin3} shall be handled similarly. Also, we shall be using `local' forms of such principles. That is, there is a given closed and bounded interval $I$ (in an arbitrary ordered field) such that we shall consider only the measurable sets contained in $I$ and functions with domain $I$ (or a subset of $I$). This `local' perspective means that, among the many forms of the first principle, we shall indeed be using that form which involves the symmetric difference of two sets, where the second set is the union of a finite number of intervals, which was originally in the aforementioned quote from J. E. Littlewood.

The issue of how to define Lebesgue measure and Lebesgue integrals in an arbitrary ordered field $\F$ was one of the first issues we had to deal with. If the usual outer measure approach is to be used, then we have to define outer measure as the infimum of some set, but then, the \emph{Existence of Infima}, or the assertion that any nonempty subset of $\F$ that has a lower bound has an infimum, is one of the equivalent forms of the completeness axiom \cite[p. 108]{tei13}, yet $\F$ need not be complete. What we found as a suitable approach is the Riesz method \cite[Chapter II]{cha95}, in which step functions form the starting point for establishing the definition of the Lebesgue integral, and the measurability of a set is defined later as the Lebesgue measurability of its characteristic function.

\section{Preliminaries}\label{PrelSec}

Let $\F$ be an ordered field. Thus, $\F$ has characteristic zero, and consequently, contains a subfield isomorphic to the field $\Q$ of all rational numbers. The usual ordering in $\Q$ is consistent with the order relation $<$ on $\F$ defined by $a<b$ if and only if $b-a\in\P$, where $\P$ is the set of all positive elements of $\F$.  The relation $<$ on $\F$ obeys a \emph{Trichotomy Law} which states that for any $a,b\in\F$, exactly one of the assertions $a<b$, $b<a$ or $a=b$, is true.

The relation $>$ is defined by $a>b$ if and only if $b<a$, and the relations $<$ and $>$ may be extended in the usual manner to obtain the relations $\leq$ and $\geq$, respectively. Using these order relations, some standard notions of analysis may be defined for the field $\F$. 

Given $a,b\in\F$, the open interval with left endpoint $a$ and right endpoint $b$ is $\abOpen:=\{x\in\F\  :\  a<x<b\}$. The intervals $\abClosed$, $\lpar a,b\rbrak$ and $\lbrak a,b\rpar$ may be defined in the obvious manner, by taking the union of $\abOpen$ with one or both of its endpoints. The length of any of the four aforementioned intervals is defined to be $b-a$. 

Given functions $f,g:\abClosed\into\F$, by $f\geq g$, we mean $f(x)\geq g(x)$ for all $x\in\abClosed$. A function $f:\abClosed\into\F$ is continuous at $c\in\abClosed$ if, for each $\varepsilon\in\P$, there exists $\delta\in\P$ such that for any $x\in\abClosed\cap\lpar c-\delta,c+\delta\rpar$, $f(x)-f(c)\in\lpar-\varepsilon,\varepsilon\rpar$. We say that $f:\abClosed\into\F$ is a continuous function if $f$ is continuous at each element of $\abClosed$.

Since $\F$ contains $\Q$ as a subfield, we may view the set of all positive integers $\N\sub\Q$ as a subset of $\F$. A sequence in $\theSet\sub\F$ is a function\linebreak $a:\N\into\theSet$. The traditional notation is $a_n:=a(n)$ for any $n\in\N$, and instead of referring to $a$ as a sequence, we say that $\aseq$ is a sequence. If indeed $\aseq$ is a sequence in some subset of $\F$, $a_n$ may further be equal to some other expression determined by $n$. A sequence $\aseq$ converges to $L\in\F$ if, for each $\varepsilon\in\P$, there exists $N\in\N$ such that for any $n\in\N$ with $n\geq N$, we have $a_n-L\in\lpar-\varepsilon,\varepsilon\rpar$. If indeed $\aseq$ converges to $L$, then using the Trichotomy Law in $\F$, the element $L$ is unique, and we define $\nseqLim a_n:=L$. Given sequences $\aseq$ and $\bseq$ such that for each $n\in\N$, $b_n=\displaystyle\sum_{k=1}^na_k$, if $\bseq$ converges, then $\displaystyle\sum_{k=0}^\infty a_k:=\nseqLim b_n$.  

Given $\abClosed\sub\F$, we shall also be considering sequences of functions $\abClosed\into\theSet$ for some $\theSet\sub\F$. By such, we simply mean that each $n\in\N$ is assigned to a unique function $\varphi_n:\abClosed\into\theSet$, and we denote the function sequence by $\lpar\varphi_n\rpar$. We say that $\lpar\varphi_n\rpar$ is \emph{monotonically decreasing} if $\varphi_{n}\geq \varphi_{n+1}$ for any $n\in\N$. Let $\lpar\varphi_n\rpar$ be a sequence of functions $\abClosed\into\theSet$. We say that the functions in $\lpar\varphi_n\rpar$ \emph{converge uniformly} to a function $f:\abClosed\into\theSet$ if for each $\varepsilon\in\P$, there exists $N\in\N$ such that for any $n\geq N$ and any $x\in\abClosed$, $f(x)-\varphi_n(x)\in\lpar-\varepsilon,\varepsilon\rpar$. If indeed the functions in $\lpar\varphi_n\rpar$ converge to $f$ and each $\varphi_n$ is a continuous at any element of $\abClosed$, then by routine instantiation of quantifiers, $f$ is also continuous at any element of $\abClosed$.

By a \emph{partition} of $\abClosed\neq\emptyset$, we mean a finite subset of $\abClosed$ that contains the endpoints $a$ and $b$. If $a=b$, then the only possible partition of $\abClosed$ is the singleton $\{a\}=\{b\}$. For the non-degenerate case, which is when $a<b$, we use the traditional notation, in which, if indeed we have a partition $\Delta$ of $\abClosed$ with cardinality $n$, then the elements of $\Delta$ are indexed as\linebreak $a=x_0<x_1<\cdots<x_n=b$. A function $\varphi:\abClosed\into\F$ is a \emph{step function} if there exists a partition $\{x_0,x_1,\ldots,x_n\}$ of $\abClosed$ such that for each $i\in\{1,2,\ldots,n\}$, the restriction $\left.\varphi\right|_{\lpar x_{i-1},x_i\rpar}$ is a constant function, that is, a function with a one-element range, say $\{M_i\}$ for some $M_i\in\F$. In such a case, the integral of $\varphi$ over $\abClosed$ is defined as 
\begin{eqnarray}
\Int_a^b\varphi:=\sum_{i=1}^nM_i(x_i-x_{i-1}).\label{IntegralStep}
\end{eqnarray}
For the degenerate case $a=b$, the summation in \eqref{IntegralStep} is an empty sum, and so $\Int_a^b\varphi=0$. If $a<b$, and if $M_i\in\P\cup\{0\}$ for any $i\in\{1,2,\ldots,n\}$, then $\Int_a^b\varphi\geq 0$. 

Consider an arbitrary nondegenerate $\abClosed\sub\F$. We say that $\theSet\sub\abClosed$ is a \emph{null set} or \emph{has measure zero} if, for each $\varepsilon\in\P$, there exists a countable collection $\{\lpar a_n,b_n\rpar\sub\abClosed\  :\  n\in\N\}$ of open intervals such that
\begin{eqnarray}
\theSet\sub\bigcup_{n=1}^\infty\lpar a_n,b_n\rpar,\qquad\sum_{n=1}^\infty(b_n-a_n)<\varepsilon.\nonumber
\end{eqnarray}
A statement $\mathscr{P}(x)$ is said to hold \emph{almost everywhere in $\abClosed$} if\linebreak $\{x\in\abClosed\  :\  \neg\mathscr{P}(x)\}$ has measure zero. We say that a sequence $\lpar\varphi_n\rpar$ of functions $\abClosed\into\F$ converges to a function $f:\abClosed\into\F$ almost everywhere in $\abClosed$ if $f(x)=\nseqLim\varphi_n(x)$ almost everywhere in $\abClosed$. A function $f:\abClosed\into\P\cup\{0\}$ is \emph{Lebesgue measurable} if there exists a monotonically decreasing sequence $\lpar\varphi_n\rpar$ of step functions $\abClosed\into\P\cup\{0\}$ that converge to $f$ almost everywhere in $\abClosed$. A function $f:\abClosed\into\P\cup\{0\}$ \emph{has a Lebesgue integral} if $f$ is Lebesgue measurable and if there exists $\Int_a^bf\in\F$ such that for any  sequence $\lpar\varphi_n\rpar$ of monotonically decreasing step functions $\abClosed\into\P\cup\{0\}$ that converge to $f$ almost everywhere in $\abClosed$, the sequence $\lpar\Int_a^b\varphi_n\rpar$ of integrals converges to $\Int_a^bf$. The notion of Lebesgue measurability and of having a Lebesgue integral may then be extended to a function $\abClosed\into\F$ in the usual manner, which is by considering the nonnegative and negative `parts' of a function.

For any sets $X$ and $Y$, we define $X\setdiff Y:=\{x\in X\  :\  x\notin Y\}$,\linebreak and $X\symdiff Y:=(X\setdiff Y)\cup(Y\setdiff X)$.

We say that $E\sub\abClosed$ is a \emph{measurable set} if its \emph{characteristic function} 
\begin{eqnarray}
\chi_E(x):=\begin{cases}1, & \mbox{if }x\in E,\\
0, & \mbox{if }x\in \abClosed\setdiff E,\end{cases}\nonumber
\end{eqnarray}
is a measurable function. If $E$ has measure zero, or $\chi_E$ has a Lebesgue integral, then $E$ \emph{has Lebesgue measure}. In the former case, $\Int_a^b\chi_E=0$.

By a \emph{cut}\footnote{This definition of cut was taken from \cite[p. 60]{mon08}. We chose it because it is apparently more concise. The definition in \cite{dev14,pro13,tei13} is based on the more traditional, which is that a cut is a pair of subsets of $\F$.} of $\F$ we mean a nonempty proper subset $A$ of $\F$ such that for any $a\in A$ and any $b\in\F\setdiff A$, $a<b$. We say that $c\in\F$ is a \emph{cut point} of a cut $A\sub\F$ if for any $a\in A$ and any $b\in\F\setdiff A$, $a\leq c\leq b$. A cut of $\F$ that does not have a cut point is a \emph{gap}. We say that $\F$ is a \emph{complete ordered field} if $\F$ satisfies the axiom:
\begin{enumerate}
\item[\CA] {\it Cut Axiom.} Every cut of $\F$ is not a gap.
\end{enumerate} 
Otherwise, $\F$ is said to be \emph{incomplete}. 

Let $a\in\F$. If $b\in\F$ such that $a\leq b$, then we define $\max\{a,b\}:=b$, $\min\{a,b\}:=a$, and $|a|:=\max\{-a,a\}$.

\section{Equivalence of Littlewood's Principles to the Cut Axiom}

We now prove the equivalence of Littlewood's three principles to \CA, with a couple more statements added to our list of equivalences.

\begin{theorem} For an arbitrary ordered field $\F$, each of the statements
\begin{enumerate}\item[\LIP] \emph{Lebesgue Integral Property.}\footnote{The names \LIP\  and \LMP\  are not popularly used in standard real analysis. Also, these two statements reduce to trivialities when $\F$ is complete. We chose to name these two statements as such, in analogy to what in \cite{dev14} was called the \emph{Darboux Integral Property}, which states that every Darboux integrable function $f:\abClosed\into\F$ has a Darboux integral. According to \cite[p. 271]{dev14}, the Darboux Integral Property (in conjunction with some other statement) is one of the equivalent forms of \CA.} Given $\abClosed\sub\F$, every Lebesgue measurable function $\abClosed\into\F$ has a Lebesgue integral.
\item[\LMP] \emph{Lebesgue Measure Property.} Given $\abClosed\sub\F$, every measurable subset of $\abClosed$ has Lebesgue measure.
\item[\LPi] \emph{Littlewood's First Principle.} Given $\abClosed\sub\F$, for each measurable set $E\sub\abClosed$ and each $\varepsilon\in\P$, there exist intervals $I_1,I_2,\ldots, I_m\sub\abClosed$ such that if $U=\displaystyle\bigcup_{i=1}^mI_i$, then $\Int_a^b\chi_{E\symdiff U}<\varepsilon$.
\item[\LPii] \emph{Littlewood's Second Principle.} Given $\abClosed\sub\F$, for each Lebesgue measurable function $f:\abClosed\into\F$ and each $\varepsilon\in\P$, there exists a measurable set $E\sub\abClosed$ such that $f:\abClosed\setdiff E\into\F$ is a continuous function and that $\Int_a^b\chi_{E}<\varepsilon$.
\item[\LPiii] \emph{Littlewood's Third Principle.} Given $\abClosed\sub\F$, for each sequence $\lpar \varphi_n\rpar$ of Lebesgue measurable functions $\abClosed\into\F$ that converge to\linebreak $f:\abClosed\into\F$ almost everywhere in $\abClosed$, and each $\varepsilon\in\P$, there exists a measurable set $E\sub\abClosed$ such that the functions $\varphi_n:\abClosed\setdiff E\into\F$ converge uniformly to $f:\abClosed\setdiff E\into\F$, and that $\Int_a^b\chi_{E}<\varepsilon$.
\end{enumerate}
is equivalent to \CA.
\end{theorem}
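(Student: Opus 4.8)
\emph{Plan.} I would prove the implication from \CA\ to each of the five properties classically, and establish the converses by contraposition: a single construction will simultaneously defeat \LIP, \LMP, \LPi\ and \LPiii\ in any incomplete field, and a separate, more delicate construction will handle \LPii.

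\emph{Forward direction.} Since a complete ordered field is isomorphic to $\R$, this amounts to recalling the classical theory. In the Riesz development of the Lebesgue integral, a function that is an almost-everywhere monotone decreasing limit of nonnegative step functions on $\abClosed$ is dominated almost everywhere by a step function, hence almost everywhere bounded and ordinarily measurable, hence integrable, and the common value of $\lim\Int_a^b\varphi_n$ over all such approximating sequences is its Lebesgue integral; passing to positive and negative parts yields \LIP. Then \LMP\ is the case of characteristic functions, \LPi\ is the standard in-measure approximation of a measurable set by a finite union of intervals, \LPii\ is Lusin's Theorem, and \LPiii\ is Egoroff's Theorem.

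\emph{Converse, except Littlewood's second principle.} I would assume \CA\ fails, so that $\F$ is either non-Archimedean or Archimedean but not Cauchy-complete, and in each case produce on $[0,1]$ a set $S$ that is a measurable set but has no Lebesgue measure. If $\F$ is non-Archimedean, then $(1/n)$ has no limit in $\F$ (such a limit would be $\inf\{1/n:n\in\N\}$, but the positive infinitesimals have no largest element), and I take $S=\{x:0\le x<1/n\text{ for all }n\in\N\}$: the step functions $\chi_{[0,1/n)}$ are nonnegative, monotone decreasing and converge to $\chi_S$ everywhere, so $S$ is a measurable set, while $\Int_0^1\chi_{[0,1/n)}=1/n$ diverges in $\F$, so $\chi_S$ has no Lebesgue integral. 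If $\F$ is Archimedean but not Cauchy-complete, then $\F$ embeds densely in $\R$, a nonconvergent Cauchy sequence has a limit $\xi\in\R\setminus\F$ which an affine substitution with coefficients in $\F$ moves into $(0,1)$, and taking $q_n\downarrow\xi$ in $\F$ with $S=\{x\in[0,1]\cap\F:x<q_n\text{ for all }n\}$ the step functions $\chi_{[0,q_n)}$ decrease to $\chi_S$ everywhere while $\Int_0^1\chi_{[0,q_n)}=q_n$ diverges in $\F$. In both cases $S$ is not null, since $0\in S$ and $0$ belongs to no open subinterval of $[0,1]$; so $S$ defeats \LMP\ and \LIP. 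It also defeats \LPi: for any finite union of intervals $U$ one has $\chi_{S\symdiff U}=\pm\chi_S+\psi$ with $\psi$ a step function (at most one interval of a disjoint refinement of $U$ straddles the gap), so integrability of $\chi_{S\symdiff U}$ would force that of $\chi_S$. And it defeats \LPiii: applied to $(\chi_{[0,1/n)})$ (respectively $(\chi_{[0,q_n)})$), which converges to $\chi_S$ everywhere but not uniformly off any set of small measure, it would yield a measurable $E$ with $\Int_0^1\chi_E<\varepsilon$ containing some $[0,1/N)\setminus S$; but $\chi_{[0,1/N)\setminus S}=\chi_{[0,1/N)}-\chi_S$ is not integrable, while it is measurable and dominated by the integrable $\chi_E$ — a contradiction. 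This uses the standard facts that the measurable sets form an algebra and that a measurable function dominated by an integrable one is integrable; alternatively \LPi\ and \LPiii\ can be obtained through the reductions \LMP\ $\Longrightarrow$ \LPi\ and \LPiii\ $\Longrightarrow$ \LMP, each proved by approximating $\chi_E$ from above by step functions and thresholding at $\tfrac12$.

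\emph{Converse for Littlewood's second principle, and the main obstacle.} The set $S$ is of no use for \LPii, because $\chi_S$ turns out to be continuous on all of $[0,1]$: downward-closedness of $S$ and the absence of a supremum make every point interior to one of its two level sets. I would instead have to construct a genuinely discontinuous Lebesgue measurable function whose discontinuity set cannot be absorbed into a set of small Lebesgue measure — that is, a measurable set for which inner regularity fails in a manner pinned to the gap. This is the step I expect to be the hardest. The difficulty is the rigidity of Riesz measurability — essentially almost-everywhere boundedness together with a one-sided monotone step-function structure — which forces the characteristic functions of one-sided gap sets to be continuous and the obvious rapidly oscillating examples near the gap to be non-measurable, so a usable witness must be assembled from the incompleteness data by hand and then checked both to be Lebesgue measurable and to resist every small-measure repair. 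A further, pervasive subtlety is that over a non-Archimedean field habitual facts fail — a countable set need not be null, an open set need not be Lebesgue measurable — so the Archimedean versus non-Archimedean split must be carried through every argument, the \LPii\ construction included.
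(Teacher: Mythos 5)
The decisive gap is the converse for \LPii, which you yourself flag as unresolved: your witness $S$ has an everywhere-continuous characteristic function, so it cannot defeat Lusin's theorem, and you stop at describing the difficulty of building a better witness rather than building one. The paper never builds one. It closes its second cycle with the \emph{positive} implication $\mLPii\implies\mLPi$, proved in an arbitrary ordered field (given a measurable $E$, apply \LPii\ to $\chi_E$ to get a small exceptional set $F$, show $F$ is null via the step-function approximants, threshold the approximants of $\chi_E$ at $\tfrac12$ to get finite unions of intervals $B_n$, and use continuity of $\chi_{E\symdiff B_n}$ off a null set to show $E\symdiff B_n$ is eventually null), and then combines this with $\neg\mCA\implies\neg\mLPi$; thus $\neg\mLPii$ follows by contraposition without ever exhibiting a measurable function violating \LPii. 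Likewise $\mLPiii$ is dispatched through the classical implication $\mLPiii\implies\mLPii$ rather than by a direct counterexample. Your instinct that a direct \LPii\ counterexample is hard is sound, but the equivalence is not proved until you either produce one or supply a reduction of \LPii\ to an already-defeated statement, and you supply neither.

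A secondary soft spot is your direct refutation of \LPiii: it invokes ``a measurable function dominated by an integrable one is integrable'' to conclude that $\chi_{[0,1/N)\setminus S}$ is integrable. Over an incomplete ordered field this dominated-integrability principle is itself a completeness-type assertion --- the decreasing sequence of integrals of the approximating step functions is bounded below but need not converge, this being exactly the monotone convergence property --- so it is not available under the hypothesis $\neg\mCA$. The alternative you gesture at is only a clause (and as written, $\mLMP\implies\mLPi$ points the wrong way for a contrapositive; you would need $\mLPi\implies\mLMP$). The remainder of your plan is fine and parallels the paper: the forward directions via the isomorphism of a complete ordered field with $\R$, and the explicit gap-set $S$ refuting \LIP, \LMP\ and \LPi, your two-case (Archimedean versus non-Archimedean) construction being a concrete instantiation of the non-convergent increasing sequence the paper extracts abstractly from Teismann's Lemma~B.
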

\begin{proof} We shall prove
\begin{eqnarray}
\mCA \implies \mLIP \implies \mLMP \implies \CA,\label{TheCycle1}\\
\mCA \implies \mLPiii \implies \mLPii \implies \mLPi \implies \CA.\label{TheCycle2}
\end{eqnarray}
Let \Star\  be one of the statements \LIP, \LMP, \LPi, \LPii, or \LPiii. From standard real analysis, \Star\  is true for the ordered field $\R$. Suppose $\F$ is an ordered field that satisfies $\neg\mStar$. Then $\F$ cannot be $\R$, but we have a well-known fact\footnote{See, for instance, \cite[pp. 601--605]{spi08}.} that any complete ordered field is isomorphic to $\R$, so $\F$ is incomplete, and thus, $\neg\mCA$ holds in $\F$. We have thus proven\linebreak $\mCA\implies\mStar$ by contraposition. In particular,
\begin{eqnarray}
\mCA &\implies& \mLIP,\nonumber\\
\mCA & \implies & \mLPiii.\nonumber
\end{eqnarray}
The implication
\begin{eqnarray}
\mLIP &\implies& \mLMP,\nonumber
\end{eqnarray}
is trivial, while \LPii\  is well-known as a consequence of \LPiii. One proof of
\begin{eqnarray}
\mLPiii\implies\mLPii,\nonumber
\end{eqnarray}
can be found in \cite[p. 110]{cha95}, and in this proof, there is a straightforward use of \LPiii\  to carry out a `modus ponens' argument to prove \LPii, and all notions [mainly, continuity and uniform convergence] used in the proof are valid for the arbitrary ordered field $\F$, where such necessary notions have been defined for $\F$ in Section~\ref{PrelSec}. To complete the proof of \eqref{TheCycle1}--\eqref{TheCycle2}, only three implications remain, which we prove in the following.
\newline

\noindent $\neg\mCA\implies \neg\mLMP$. Suppose $\F$ is incomplete. We proceed by contradiction, so suppose \LMP\  holds. By \cite[Lemma~B, p. 110]{tei13}, there exist a gap $A\sub\F$ and some strictly increasing and non-convergent sequence $\aseq$ in $A$ such that
\begin{eqnarray}
\forall x\in A\quad \lbrak\  a_1<x \  \implies\   \exists!n\in\N\   x\in\lpar a_n,a_{n+1}\rbrak\  \rbrak.\label{TeiSeq}
\end{eqnarray} 
Let $a:=a_1$, let $b\in\F\setdiff A$, and let $E:=\abClosed\setdiff A$. For each $n\in\N$, define $\varphi_n:\abClosed\into\F$ by $\varphi_n:x\mapsto 1-\chi_{\lbrak a,a_{n}\rbrak}(x)$. Since, for any $n\in\N$,\linebreak $\lbrak a,a_n\rbrak\sub\lbrak a,a_{n+1}\rbrak$, we find that $\lpar\varphi_n\rpar$ is a monotonically decreasing sequence of step functions, with $\Int_a^b\varphi_n=b-a_{n}$ for any $n$. Let $x\in\abClosed$, and let $\varepsilon\in\P$. If $x\in A$, then $\chi_E(x)=0$, and by \eqref{TeiSeq}, there exists $N\in\N$ such that for any $n\geq N+1$, $\varphi_n(x)=0$, and furthermore, $\chi_E(x)-\varphi_n(x)=0\in\lpar-\varepsilon,\varepsilon\rpar$. If $x\in\abClosed\setdiff A$, since $\aseq$ is a sequence in $A$, by the definition of cut, we have, for any  $n\in N$, $x>a_{n}$, so $x\notin\lbrak a,a_{n}\rbrak$. This implies $\varphi_n(x)=1$, but $\chi_E(x)=1$, so we further obtain $\chi_E(x)-\varphi_n(x)=0\in\lpar-\varepsilon,\varepsilon\rpar$. We have thus shown that the statement $\chi_E(x)=\nseqLim\varphi_n(x)$ is true for any $x\in\abClosed$, and consequently, almost everywhere in $\abClosed$. This means that $E$ is measurable, and by the \LMP, there exists $\Int_a^b\chi_E\in\F$ such that $\Int_a^b\chi_E=\nseqLim\Int_a^b\varphi_n=\nseqLim(b-a_n)$. That is, the sequence $\lpar b-a_n\rpar$ converges to $\Int_a^b\chi_E$. Routine arguments may be used, to show that, in $\F$, the usual linearity and constant rules for sequence limits hold, so  $(a_n-b)$ converges to $-\Int_a^b\chi_E$ and taking the sum of $(a_n-b)$ with the constant sequence $(b)$, we find that $\aseq$ converges to $b-\Int_a^b\chi_E$, contradicting the fact that $\aseq$ is non-convergent. Therefore, $E$ does not have Lebesgue measure.\newline

\noindent $\mLPii\implies\mLPi$. Let $\varepsilon\in\P$. If $E\sub\abClosed$ is a measurable set, then $\chi_E$ is a measurable function, and by \LPii, there exists a measurable set $F\sub\abClosed$ such that $\chi_E:\abClosed\setdiff F\into\P\cup\{0\}$ is a continuous function and that $\Int_a^b\chi_{F}<\frac{\varepsilon}{2}$. Since the integral $\Int_a^b\chi_{F}$ of the nonnegative characteristic function $\chi_F$, is nonnegative, 
\begin{eqnarray}
-\frac{\varepsilon}{2}<\Int_a^b\chi_{F}<\frac{\varepsilon}{2},\label{xLebInt}
\end{eqnarray}
which, in particular, means that $F$ has Lebesgue measure. Since $ F$ is measurable, there exists a monotonically decreasing sequence $\lpar\varphi_n\rpar$ of step functions $\abClosed\into\P\cup\{0\}$ that converge to $\chi_{ F}$ almost everywhere in $\abClosed$, and that
\begin{eqnarray}
\Int_a^b\chi_{ F} = \nseqLim\Int_a^b\varphi_n.\nonumber
\end{eqnarray}
Consequently, there exists $N\in\N$ such that for all $n\geq N$, 
\begin{eqnarray}
-\frac{\varepsilon}{2}< \Int_a^b\varphi_n-\Int_a^b\chi_{ F}<\frac{\varepsilon}{2},\nonumber
\end{eqnarray}
which, in conjunction with \eqref{xLebInt}, completes the proof that
\begin{eqnarray}
\nseqLim\Int_a^b\varphi_n=0.\label{xLebInt2}
\end{eqnarray} 

For each $m\in\N$, define $P_m=\{x\in\abClosed\  :\  \chi_{ F}\geq\frac{1}{m}\}$. Since $\chi_{ F}$ takes on only values of $0$ or $1$,
\begin{eqnarray}
\bigcup_{m\in\N} P_m =\{x\in\abClosed\  :\  \chi_{ F}(x)=1\}= F.\label{xLebIntII2}
\end{eqnarray}
Since $\lpar\varphi_n\rpar$ is monotonically decreasing and converges to $\chi_F$ at $x$, by a routine argument, for any $n\in\N$, $\varphi_n\geq \chi_{ F}$, and so, 
\begin{eqnarray}
\varphi_n(x)\geq\frac{1}{m},\label{xLebIntII1}
\end{eqnarray}
for all $x\in P_m$. Since $\varphi_n$ is a step function, if we take the values of $\varphi_n$ that are at least $\frac{1}{m}$, then we have a finite subset of $\P\cup\{0\}$, the inverse image of which, under $\varphi_n$, is the union of a finite number of intervals, and this union contains $P_m$. Let such intervals be collected in the set $\mathscr{C}_n$, and let $\Sigma_n$ be the sum of the lengths of the intervals in $\mathscr{C}_n$. By \eqref{xLebIntII1}, $\Int_a^b\varphi_n\geq \frac{\Sigma_n}{m}$, and by \eqref{xLebInt2}, given $\varepsilon\in\P$, there exists $N\in\N$ such that for all $n\geq N$,
$\frac{\displaystyle\Sigma_n}{m}\leq \Int_a^b\varphi_n<\frac{\varepsilon}{m}$, and so, $\Sigma_n<\varepsilon$. At this point, we have proven that, for any $m\in\N$, $P_m$ has measure zero, and by \eqref{xLebIntII2}, so does $ F$.

Let $\lpar \phi_n\rpar$ be a monotonically decreasing sequence of step functions\linebreak $\abClosed\into\P\cup\{0\}$ such that $\chi_E(x)=\nseqLim\phi_n(x)$ for all $x\in\abClosed\setdiff Q$, where $Q$ has measure zero. For each $n\in\N$, define $\psi_n:\abClosed\into\P\cup\{0\}$ by 
\begin{eqnarray}
\psi_n(x):=\begin{cases} 1, & \mbox{if }\phi_n(x)\geq \frac{1}{2},\\
0, & \mbox{if }\phi_n(x)<\frac{1}{2},\end{cases}\nonumber
\end{eqnarray}
which is a step function. By a routine epsilon argument, 
\begin{eqnarray}
\nseqLim\left|\psi_n(x)-\chi_E(x)\right|=0,\label{Lusin1}
\end{eqnarray}
for any $x\in\abClosed\setdiff Q$. By an argument similar to that done earlier in this proof, we have $\psi_n=\chi_{B_n}$, where $B_n$ is the union of a finite number of intervals in $\abClosed$. By a routine argument, $\left|\psi_n-\chi_E\right|=\chi_{E\symdiff B_n}$. Let $D_n$ be the set of all discontinuities of $\psi_n$. Since $\psi_n$ is a step function, and $\chi_E:\abClosed\setdiff F\into\P\cup\{0\}$ is a continuous function, the set $D_n\cup F$ of all discontinuities of  $\chi_{E\symdiff B_n}=\left|\psi_n-\chi_E\right|$ is a set of measure zero, and so is $R:=Q\cup F\cup\displaystyle\bigcup_{n\in\N}D_n$. 

Let $x\in\abClosed\setdiff R$. Thus, for each $n\in\N$, $\chi_{E\symdiff B_n}$ is continuous at $x$, and since $\chi_{E\symdiff B_n}$ takes on a value of only $0$ or $1$, we find that there exists an interval $Z_x\sub\abClosed$ that contains $x$ and that $\chi_{E\symdiff B_n}$ is zero on all of $Z_x$, or there exists an interval $O_x\sub\abClosed$ that contains $x$ and that $\chi_{E\symdiff B_n}$ has value $1$ on all of $O_x$. Let $\varepsilon_0=\min\left\{1,\varepsilon\right\}$. By \eqref{Lusin1}, there exists $N\in\N$ such that for all $n\geq N$,
\begin{eqnarray}
-1 < & \chi_{E\symdiff B_n}(x) & < 1,\nonumber
\end{eqnarray}
and since $\chi_{E\symdiff B_n}(x)$ can be only $0$ or $1$, we only have $\chi_{E\symdiff B_n}(x)=0$, so $O_x=\emptyset$. Thus, $x\in Z_x$, and we have proven
\begin{eqnarray}
\abClosed\setdiff R\sub\bigcup_{x\in\abClosed\setdiff R}Z_x\sub\{x\in\abClosed\  :\  \chi_{E\symdiff B_n}=0\},\nonumber
\end{eqnarray}
from which we deduce that $E\symdiff B_n=\{x\in\abClosed\  :\  \chi_{E\symdiff B_n}\neq 0\}\sub R$.

At this point, we have proven that for all $n\geq N$, $\chi_{E\symdiff B_n}$ has measure zero, and hence has Lebesgue measure. In particular, $\Int_a^b\chi_{E\symdiff B_n}=0$. Thus, we may state that, for each $\varepsilon\in\P$, there exists a union $U=B_N$ of a finite number of intervals such that $ \Int_a^b\chi_{E\symdiff U}=0<\varepsilon$. Therefore, $\F$ satisfies \LPi.
\newline

\noindent $\neg\mCA\implies\neg\mLPi$. Suppose $\F$ is incomplete, and that, tending towards a contradiction, \LPi\  holds. As shown in the proof of $\neg\mCA\implies \neg\mLMP$, there exist a gap $A\sub\F$, some $a\in A$ and $b\in F\setdiff A$ such that $E:=\abClosed\setdiff A$ is a measurable set, but does not have Lebesgue measure. Let $\varepsilon\in\P$. By \LPi, there exist intervals $I_1,I_2,\ldots,I_\mu\sub\abClosed$ such that if  $U=\displaystyle\bigcup_{k=1}^\mu I_k$, then $\Int_a^b\chi_{E\symdiff U}<\frac{\varepsilon}{2}$. By an argument similar to one of those that were done in the proof of $\mLPii\implies\mLPi$, $E\symdiff U$ has measure zero.

Without loss of generality, we assume $I_1,I_2,\ldots,I_\mu$ are pairwise disjoint, and that, for some index $K\in\{1,2,\ldots,\mu\}$, the interval $I_K$, if nonempty, intersects both $\abClosed\cap A$ and $\abClosed\setdiff A$, and we further assume that all intervals $I_k$ with $k<K$ are subsets of $\abClosed\setdiff A$, while all intervals $I_k$ with $k>K$ are subsets of $\abClosed\cap A$. 

Let $T:=\displaystyle\bigcup_{k=1}^KI_k$. Since $E\symdiff U$ has measure zero, the sets $I_K\setdiff E\sub E\symdiff U$ and $E\symdiff T\sub E\symdiff U$ also have measure zero. Furthermore, the equation 
\begin{eqnarray}
\chi_E(x)=\chi_T(x)+\chi_{E\symdiff T}(x),\label{LebInt4}
\end{eqnarray}
is true for all $x\in\abClosed$ except those $x\in I_K\setdiff E$. Hence, \eqref{LebInt4} is true almost everywhere in $\abClosed$. Since $E\symdiff T$ has measure zero, it has Lebesgue measure, so $\chi_{E\symdiff T}$ has a Lebesgue integral. The other function in the right-hand side of \eqref{LebInt4}, which is $\chi_T$, is a step function because $T$ is an interval, and thus, $\chi_T$ also has a Lebesgue integral, and, since \eqref{LebInt4} is true almost everywhere in $\abClosed$, by a routine argument, we find that $\chi_E$ also has a Lebesgue integral, contradicting the fact that $E$ does not have Lebesgue measure. Therefore, \LPi\  is false in $\F$.
\end{proof}


\end{document}